\documentclass{article}
\usepackage{amsmath,amsthm,amssymb}

\theoremstyle{definition}
\newtheorem{Thm}{Theorem}[section]

\newtheorem{Lem}[Thm]{Lemma}

\newtheorem{Prop}[Thm]{Proposition}
\newtheorem{Rem}[Thm]{Remark}
\newtheorem{Quest}[Thm]{Question}

\DeclareMathOperator{\Homeo}{Homeo}

\DeclareMathOperator{\rot}{rot}
\DeclareMathOperator{\PSL}{PSL}
\DeclareMathOperator{\SL}{SL}
\DeclareMathOperator{\Tr}{tr}
\DeclareMathOperator{\Rep}{R}
\DeclareMathOperator{\id}{id}
\DeclareMathOperator{\Int}{Int}
\newcommand{\C}{\ensuremath{\mathbb{C}}}
\newcommand{\R}{\ensuremath{\mathbb{R}}}

\newcommand{\Z}{\ensuremath{\mathbb{Z}}}

\title{Rotation number and lifts of a Fuchsian action of the modular group on the circle}
\date{}
\author{Yoshifumi MATSUDA\footnote
{Department of Physics and Mathematics, College of Science and Engineering, 
Aoyama Gakuin University 5-10-1 Fuchinobe, Chuo-ku, Sagamihara-shi, Kanagawa 252-5258 Japan, 
ymatsuda@gem.aoyama.ac.jp} \footnote{Partly supported by JSPS KAKENHI Grant Number 25800036.}
}
\begin{document}

\maketitle

\begin{abstract}
We characterize the semi-conjugacy class of a Fuchsian action of the modular group on the circle in terms of rotation numbers of two standard generators and that of their product. 
We also show that among lifts of a Fuchsian action of the modular group, only 5-fold lift admits a similar characterization. 
These results indicate similarity and difference between rotation number and linear character. \\

\noindent
Keywords: Rotation number, modular group, group actions on the circle \\

\noindent
2010 MSC: Primary 37E45; Secondary 37C85, 37E10

\end{abstract}

\section{Introduction}
Rotation number of an orientation-preserving homeomorphism of the circle has similar properties to absolute value of the trace of an element in $\PSL(2,\R)$. 
For example, they are invariant under conjugation and furthermore, J{\o}rgensen's criterion of discreteness for subgroups of $\PSL(2,\R)$ \cite[Theorem 2]{Jor77}, which can be described in terms of absolute value of the trace, has an analogue for the group of real analytic diffeomorphisms of the circle (see \cite[Theorem 1.2]{Mat09}). 
In this article, we give another similarity between rotation number and linear character from a viewpoint given by D. Calegari and A. Walker \cite{CW11}. 

\subsection{Rotation number}
We denote by $\Homeo_+(\mathbb{S}^1)$ the group of orientation-preserving homeomorphisms of the circle. 
We regard the circle $\mathbb{S}^1$ as the quotient $\R\slash\Z$ and denote by $p \colon \R \to \mathbb{S}^1$ the projection. 
Let $\widetilde{\Homeo}_+(\mathbb{S}^1)$ be the group of lifts of orientation-preserving homeomorphisms to $\R$, namely, homeomorphisms of $\R$ commuting with integral translations. 

For $\tilde{f}\in\widetilde{\Homeo}_+(\mathbb{S}^1)$, we define the translation number $\widetilde{\rot}(\tilde{f})\in\R$ of $\tilde{f}$ by 
\begin{eqnarray*}
\widetilde{\rot}(\tilde{f})=\displaystyle\lim_{n\to\infty}\dfrac{(\tilde{f})^n(\tilde{x})-\tilde{x}}{n}, 
\end{eqnarray*}
where $\tilde{x}\in\R$. 
Note that the limit exists and does not depend on the choice of a point $\tilde{x}\in\R$.
For $f\in\Homeo_+(\mathbb{S}^1)$, we define the rotation number $\rot(f)\in\R\slash\Z$ of $f$ by 
\begin{eqnarray*}
\rot(f)=\widetilde{\rot}(\tilde{f})\mod\Z,
\end{eqnarray*}
where $\tilde{f}\in\widetilde{\Homeo}_+(\mathbb{S}^1)$ is a lift of $f$ to $\R$.

Among several properties of rotation number, we recall that $\rot(f)=\dfrac{p}{q}$, where $\dfrac{p}{q}$ is a reduced fraction if and only if $f$ has a period point of period $q$. 
In particular, $\rot(f)=0$ if and only if $f$ has a fixed point (see for example \cite{Ghy01} in detail and other properties of rotation number).  

\subsection{Lifts of a group action on the circle}
For a group $\Gamma$, we denote by $\Rep(\Gamma)$ the space of homomorphisms from $\Gamma$ to $\Homeo_+(\mathbb{S}^1)$. 
We equip $\Rep(\Gamma)$ with the uniform convergence topology on generators if necessary. 

We define a lift of a group action on the circle. 

Let $k\ge 2$ be a positive integer and denote by $p_k\colon\mathbb{S}^1\to\mathbb{S}^1$ the $k$-fold covering map. 
For a group $\Gamma$, a homomorphism $\phi\in\Rep(\Gamma)$ is a $k$-fold lift of a homomorphism $\psi\in\Rep(\Gamma)$ if $p_k\circ\phi(\gamma)=\psi(\gamma)\circ p_k$ for every $\gamma\in\Gamma$. 

We remark that if $\phi\in\Rep(\Gamma)$ is a $k$-fold lift of a homomorphism $\psi\in\Rep(\Gamma)$, then we have $k\rot(\phi(\gamma))=\rot(\psi(\gamma))$ for every $\gamma\in\Gamma$. 

\subsection{Semi-conjugacy class}
Semi-conjugacy between two actions of a group on the circle has been defined in several ways (see \cite{Ghy87}, \cite{Ghy01}, \cite{Buc08}). 
In this paper, we follow the way presented in \cite{Cal07}. 

For $\phi_1, \phi_2\in\Rep(\Gamma)$, we say that $\phi_1$ is {\it{semi-conjugate}} to $\phi_2$ if there exists a continuous degree-one monotone map such that 
$h\circ\phi_1(\gamma)=\phi_2(\gamma)\circ h$ 
for every $\gamma\in\Gamma$. 
Here, a map $h \colon \mathbb{S}^1 \to \mathbb{S}^1$ is called a degree-one monotone map if it admits a lift $\tilde{h} \colon \R \to \R$ commuting with integral translations, and nondecreasing on $\R$.

Note that semi-conjugacy is not symmetric and is not an equivalence relation. 
We consider the equivalence relation generated by semi-conjugacy, which is called monotone equivalence in \cite{Cal07}. 
We call the monotone equivalence class of $\phi\in\Rep(\Gamma)$ the {\it{semi-conjugacy class}} of $\phi$. 
Note that if two minimal homomorphisms belong to the same semi-conjugacy class, then they are topologically conjugate. 
We define the semi-conjugacy class of an orientation-preserving homeomorphism of the circle in a similar way. 

A classical result due to H. Poincar\'e says that two homeomorphisms are in the same semi-conjugacy class if and only if their rotation numbers coincide, which  is similar to the fact that two matrices in $\SL(2,\R)\setminus\{\pm E\}$ are conjugate if and only if their traces coincide. 

As for group actions, however, $\phi_1, \phi_2\in\Rep(\Gamma)$ do not belong to the same semi-conjugacy class if we only suppose that $\rot(\phi_1(\gamma))=\rot(\phi_2(\gamma))$ for every $\gamma$. 
It can be seen by considering Fuchsian actions corresponding to hyperbolic structures on 2-orbifolds (see for example \cite{Cho12} about 2-orbifolds and hyperbolic structures on them). 

\subsection{Fuchsian actions}
Let $\mathcal{O}$ be a compact, connected, oriented 2-orbifold with negative orbifold Euler characteristic $\chi^{orb}(\mathcal{O})<0$. 
For each hyperbolic structure on the interior of $\mathcal{O}$ compatible with the orientation of $\mathcal{O}$, we have a homomorphism from the orbifold fundamental group $\pi_1^{orb}(\mathcal{O})$ to $\PSL(2,\R)$ by identifying the universal cover $\tilde{\mathcal{O}}$ with the hyperbolic plane $\mathbb{H}^2$. 
By considering the action on the ideal boundary $\partial \mathbb{H}^2\simeq \mathbb{S}^1$, we obtain a homomorphism $\phi_{\mathcal{O}}\in\Rep(\pi_1^{orb}(\mathcal{O}))$. 
We call such a homomorphism a {\it{Fuchsian action}} associated to $\mathcal{O}$. 
Note that the semi-conjugacy class of a Fuchsian action associated to a fixed 2-orbifold $\mathcal{O}$ is independent of the choice of a hyperbolic structure and that a Fuchsian action corresponding to a hyperbolic structure with finite area is minimal. 

In general, we cannot characterize the semi-conjugacy class of a Fuchsian action only by rotation numbers of all elements. 
In fact, for a Fuchsian action $\phi_S$ associated to a compact, connected, oriented surface $S$ with negative Euler characteristic, the homeomorphism $\phi_{S}(\gamma)$ has a fixed point for every $\gamma\in\Gamma$ but there is no global fixed point.
This means that $\rot(\phi_{S}(\gamma))=0$ for every $\gamma\in\Gamma$ but the Fuchsian action $\phi_{S}$ does not belong to the semi-conjugacy class of the trivial action. 

Now we show, however, that we can characterize the semi-conjugacy classes of a Fuchsian action of a specific 2-orbifold and its certain lift by only rotation numbers of finite elements. 

\subsection{Main result}
We focus on a special 2-orbifold. 
Let $\mathcal{O}_{2,3}$ be the 2-orbifold which is obtained from a 2-disk by making two cone-points of orders $2,3$. 
Note that the interior of $\mathcal{O}_{2,3}$ is homeomorphic to $\mathbb{H}^2\slash\PSL(2;\mathbb{Z})$ and $\pi_1^{orb}(\mathcal{O}_{2,3})$ is isomorphic to the modular group $\PSL(2,\Z)$. 
We fix a presentation 
\begin{eqnarray*}
\pi_1^{orb}(\mathcal{O}_{2,3})=\langle \alpha,\beta ~|~ \alpha^2=\beta^3=1\rangle\cong\Z_2\ast\Z_3, . 
\end{eqnarray*}
where $\alpha=\left[
\begin{array}{cc}
0 & -1 \\
1 & 0
\end{array}
\right]$ and $\beta=\left[
\begin{array}{cc}
1 & 1 \\
-1 & 0
\end{array}
\right]$. 
Let $\phi_{\mathcal{O}_{2,3}}$ be a Fuchsian action of $\mathcal{O}_{2,3}$ which is equal to the action by linear fractional transformations on $\R\cup\{\infty\}\simeq\mathbb{S}^1$. 
It follows that
\begin{eqnarray*}
\phi_{\mathcal{O}_{2,3}}(\alpha)(0)=\infty,& 
\phi_{\mathcal{O}_{2,3}}(\alpha)(\infty)=0 & \\
\phi_{\mathcal{O}_{2,3}}(\beta)(0)=\infty,\  &
\phi_{\mathcal{O}_{2,3}}(\beta)(\infty)=-1, &
\phi_{\mathcal{O}_{2,3}}(\beta)(-1)=0\ {\rm{and}} \\
\phi_{\mathcal{O}_{2,3}}(\alpha\beta)(0)=0. & & 
\end{eqnarray*}
Hence we have
\begin{eqnarray*}
(\rot(\phi_{\mathcal{O}_{2,3}}(\alpha)), \rot(\phi_{\mathcal{O}_{2,3}}(\beta)), \rot(\phi_{\mathcal{O}_{2,3}}(\alpha\beta)))=\left(\dfrac{1}{2},\dfrac{1}{3}, 0\right). 
\end{eqnarray*}

It follows from the presentation of $\pi_1^{orb}(\mathcal{O}_{2,3})$ that there exists a $k$-fold lift $\phi_{\mathcal{O}_{2,3}}^{(k)}$ of $\phi_{\mathcal{O}_{2,3}}$ if and only if $k\equiv\pm1 \mod 6$ and that such a lift is unique if it exists. 
We also have 
\begin{eqnarray*}
& & (\rot(\phi_{\mathcal{O}_{2,3}}^{(k)}(\alpha)), \rot(\phi_{\mathcal{O}_{2,3}}^{(k)}(\beta)), \rot(\phi_{\mathcal{O}_{2,3}}^{(k)}(\alpha\beta))) \\
&=& \left\{
\begin{array}{ll}
\left(\dfrac{1}{2},\dfrac{1}{3}, \dfrac{k-1}{k}\right) & (k\equiv1 \mod 6) \\
\left(\dfrac{1}{2},\dfrac{2}{3}, \dfrac{1}{k}\right) & (k\equiv-1 \mod 6)
\end{array}
\right.
\end{eqnarray*}

Now we are ready to state the main result. 

\begin{Thm}
\label{main}
Let $\phi\in\Rep(\pi_1^{orb}(\mathcal{O}_{2,3}))$. 
\begin{enumerate}
    \item If $(\rot(\phi(\alpha)), \rot(\phi(\beta)), \rot(\phi(\alpha\beta))=\left(\dfrac{1}{2},\dfrac{1}{3}, 0\right)$, then $\phi$ belongs to the semi-conjugacy class of a Fuchsian action $\phi_{\mathcal{O}_{2,3}}$. 
    \item If $(\rot(\phi(\alpha)), \rot(\phi(\beta)), \rot(\phi(\alpha\beta))=\left(\dfrac{1}{2},\dfrac{2}{3}, \dfrac{1}{5}\right)$, then $\phi$ belongs to the semi-conjugacy class of the 5-fold lift $\phi_{\mathcal{O}_{2,3}}^{(5)}$ of a Fuchsian action $\phi_{\mathcal{O}_{2,3}}$. 
\end{enumerate}
\end{Thm}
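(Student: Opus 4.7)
The plan is to deduce in both parts that the given representation $\phi$ is semi-conjugate to the claimed action by constructing a $\Gamma$-equivariant, cyclic-order-preserving map (where $\Gamma=\pi_1^{orb}(\mathcal{O}_{2,3})$) between a distinguished orbit of $\phi$ on $\mathbb{S}^1$ and the corresponding orbit of the target action, and then extending monotonically to all of $\mathbb{S}^1$ via density of the target orbit (minimality of the Fuchsian action and of its $5$-fold lift).

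For part (1), set $a=\phi(\alpha)$ and $b=\phi(\beta)$. Since $\alpha^2=1$ and $\rot(a)=1/2\neq 0$, $a$ is a fixed-point-free involution of $\mathbb{S}^1$, hence topologically conjugate to $R_{1/2}$; similarly $b^3=\id$ and $\rot(b)=1/3$ force $b$ to be conjugate to $R_{1/3}$. The hypothesis $\rot(ab)=0$ supplies a fixed point $p$ of $ab$, equivalently a point satisfying $a(p)=b^2(p)$. Mirroring the Fuchsian orbit $\phi_{\mathcal{O}_{2,3}}(\Gamma)\cdot 0=\Q\cup\{\infty\}$, I would define $h\colon \Gamma\cdot p\to\Q\cup\{\infty\}$ by $h(\phi(\gamma)(p))=\phi_{\mathcal{O}_{2,3}}(\gamma)(0)$. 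Two things must be checked: well-definedness, which amounts to showing the $\phi$-stabilizer of $p$ coincides with $\langle\alpha\beta\rangle$ (the stabilizer of $0$ under the Fuchsian action); and preservation of cyclic order on $\mathbb{S}^1$, which I would prove by induction on normal-form length in $\Z_2\ast\Z_3$, using orientation-preservation of $a,b$ and the identity $a(p)=b^2(p)$ to pin down positions of new orbit points. Once this equivariant cyclic-order bijection is in hand, density of $\Q\cup\{\infty\}$ forces a unique continuous monotone extension $h\colon \mathbb{S}^1\to\mathbb{S}^1$ of degree one, which by construction is a semi-conjugacy.

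Part (2) follows the same template: $a=\phi(\alpha)$ is still a fixed-point-free involution, conjugate to $R_{1/2}$; $b=\phi(\beta)$ has order $3$ with rotation number $2/3$, hence is conjugate to $R_{2/3}$; and $\rot(ab)=1/5$ provides a fixed point $p$ of $(ab)^5$. The map $h$ is defined on $\Gamma\cdot p$ with target the orbit of the corresponding fixed point of $\phi^{(5)}_{\mathcal{O}_{2,3}}(\alpha\beta)^5$ under $\phi^{(5)}_{\mathcal{O}_{2,3}}$, again dense by minimality of the five-fold lift. The combinatorial picture is more subtle because the parabolic orbit of the Fuchsian case unwraps into five sheets under $p_5$, and the normal-form induction must respect this five-fold structure.

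The principal obstacle in both parts is the inductive verification of cyclic-order agreement on the distinguished orbit: this is the point where the three specified rotation numbers $(\rot(a),\rot(b),\rot(ab))$ must play a rigid role, and where Calegari--Walker-type constraints on $\rot(ab)$ enter. I expect part (2) to be the harder half, because the nonzero value $1/5$ replaces the use of genuine fixed points by period-$5$ orbits, and the five-fold symmetry must be tracked carefully through the recursion.
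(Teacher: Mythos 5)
Your template---build a $\Gamma$-equivariant, cyclic-order-preserving map from a distinguished orbit of $\phi$ onto a dense orbit of the model and extend monotonically---is a legitimate route, and for part (2) it is close in spirit to the paper's argument. But as written the proposal defers exactly the step that carries all of the mathematical content: the ``inductive verification of cyclic-order agreement,'' which you correctly identify as the principal obstacle and then do not carry out. Nothing in the proposal exhibits the mechanism by which the single hypothesis $\rot(\phi(\alpha\beta))=0$ (resp.\ $\tfrac15$) rigidifies the order of the whole orbit. The paper's engine for this is a Markov-type partition: lifting to $\R$, one shows $\widetilde{\rot}(\tilde a\tilde b)$ is forced to equal $1$ (resp.\ $\tfrac65$) by the a priori bounds $\widetilde{\rot}(\tilde b)\le\widetilde{\rot}(\tilde a\tilde b)\le\widetilde{\rot}(\tilde b)+1$, picks $\tilde x_0$ with $(\tilde a\tilde b)(\tilde x_0)=\tilde x_0+1$ (resp.\ $(\widetilde{ab})^5(\tilde x_0)=\tilde x_0+1$), and derives interval inclusions ($\phi(\alpha)(J)=I$ and $\phi(\beta^{\pm1})(I)\subset J$ in part (1); the families $\tilde I_l,\tilde J_l$ with their transition rules in part (2)) that control where every normal-form word sends $\tilde x_0$. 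Without such estimates your induction has no base: in particular the well-definedness claim that the $\phi$-stabilizer of $p$ coincides with $\langle\alpha\beta\rangle$ is itself a nontrivial assertion about an arbitrary, possibly non-faithful and non-minimal $\phi$, and it is precisely these estimates that prove it (this is the content of the paper's lemma that the map $\tilde\theta$ is well defined, where a reduced word fixing $\tilde x_0$ is forced into the form $(\alpha\beta)^{n}$ with $n$ a multiple of $5$). A small but real slip in the same vein: a fixed point $p$ of $ab$ satisfies $a(p)=b(p)$, not $a(p)=b^2(p)$; the latter characterizes fixed points of $ba$.

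It is also worth noting that for part (1) the paper avoids verifying the full cyclic order altogether. It computes $\rot(\phi(\gamma))$ for \emph{every} $\gamma$ (any element not conjugate to a power of $\alpha$ or $\beta$ preserves a closed interval, hence has rotation number $0$) and then invokes Matsumoto's criterion in the form given by Mann: a connected subset of $\Rep(\Gamma)$ on which all rotation numbers agree lies in a single semi-conjugacy class; this is combined with the path-connectedness of $\Rep\left(\tfrac12,\tfrac13,0\right)$, proved by normalizing $\phi(\beta)$ to the rigid rotation. For part (2) the paper does not construct your map $h$ directly either: it first rules out finite orbits, passes to the minimal model, uses the Markov data to build a $\phi$-equivariant circle homeomorphism $\theta$ of order $5$ exhibiting $\phi$ as a $5$-fold lift of some $\psi\in\Rep\left(\tfrac12,\tfrac13,0\right)$, and then applies part (1). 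Either of these reductions would let you bypass the unproved induction, but some version of the interval estimates is unavoidable and is currently missing from your argument.
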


\begin{Rem}
\begin{enumerate}
\item Theorem \ref{main} cannot be generalized to the other lifts of $\phi_{\mathcal{O}_{2,3}}$. 
Indeed for each positive integer $k\ge 2$ we denote by $\mathcal{O}_{2,3,k}$ a compact, connected, oriented 2-orbifold which is obtained from a 2-sphere by making three cone-points of orders 2,3,$k$. 
Now suppose that $k\equiv\pm 1\mod 6$ and $k\neq 5$. 
Then we have $\chi^{orb}(\mathcal{O}_{2,3,k})<0$. 
Let $\phi_{\mathcal{O}_{2,3,k}}\in\Rep(\pi_1^{orb}(\mathcal{O}_{2,3,k}))$ be a Fuchsian action of $\mathcal{O}_{2,3,k}$. 
For a suitable presentation 
\begin{eqnarray*}
\pi_1^{orb}(\mathcal{O}_{2,3,k})&=&\langle \alpha,\beta, \gamma ~|~ \alpha^2=\beta^3=\gamma^k=\alpha\beta\gamma=1\rangle,  
\end{eqnarray*}
we have 
\begin{eqnarray*}
& & (\rot(\phi_{\mathcal{O}_{2,3,k}}(\alpha)), \rot(\phi_{\mathcal{O}_{2,3,k}}(\beta)), \rot(\phi_{\mathcal{O}_{2,3,k}}(\gamma))) \\
&=& \left(\dfrac{1}{2},\dfrac{1}{3}, \dfrac{1}{k}\right)
\end{eqnarray*}
and hence 
\begin{eqnarray*}
& & (\rot(\phi_{\mathcal{O}_{2,3,k}}(\alpha)), \rot(\phi_{\mathcal{O}_{2,3,k}}(\beta)), \rot(\phi_{\mathcal{O}_{2,3,k}}(\alpha\beta))) \\
&=& \left(\dfrac{1}{2},\dfrac{1}{3}, \dfrac{k-1}{k}\right)
\end{eqnarray*}
Let $q$ be the homomorphism from $\pi_1^{orb}(\mathcal{O}_{2,3})$ onto $\pi_1^{orb}(\mathcal{O}_{2,3,k})$ such that $q(\alpha)=\alpha$ and $q(\beta)=\beta$ and let $\iota$ be the automorphism of $\pi_1^{orb}(\mathcal{O}_{2,3})$ such that $\iota(\alpha)=\alpha$ and $\iota(\beta)=\beta^{-1}$. 
We define a homomorphism $\hat{\phi}_{\mathcal{O}_{2,3,k}}\in\Rep(\pi_1^{orb}(\mathcal{O}_{2,3}))$ by
\begin{eqnarray*}
\hat{\phi}_{\mathcal{O}_{2,3,k}}=\left\{
\begin{array}{ll}
\phi_{\mathcal{O}_{2,3,k}}\circ q & (k\equiv1 \mod 6) \\
\phi_{\mathcal{O}_{2,3,k}}\circ q\circ\iota & (k\equiv-1 \mod 6),
\end{array}
\right.
\end{eqnarray*}
Since both $\phi_{\mathcal{O}_{2,3,k}}$ and $\phi_{\mathcal{O}_{2,3}}$ are minimal, it follows that both $\hat{\phi}_{\mathcal{O}_{2,3,k}}$ are $\phi_{\mathcal{O}_{2,3}}^{(k)}$ are also minimal. 
It follows that 
\begin{eqnarray*}
& & (\rot(\hat{\phi}_{\mathcal{O}_{2,3,k}}(\alpha)), \rot(\hat{\phi}_{\mathcal{O}_{2,3,k}}(\beta)), \rot(\hat{\phi}_{\mathcal{O}_{2,3,k}}(\alpha\beta))) \\
&=& (\rot(\phi_{\mathcal{O}_{2,3}}^{(k)}(\alpha)), \rot(\phi_{\mathcal{O}_{2,3}}^{(k)}(\beta)), \rot(\phi_{\mathcal{O}_{2,3}}^{(k)}(\alpha\beta))). 
\end{eqnarray*}
Note that if $k\equiv-1 \mod 6$, then we have 
\begin{eqnarray*}
& & \rot(\hat{\phi}_{\mathcal{O}_{2,3,k}}(\alpha\beta)) \\
&=& \rot(\phi_{\mathcal{O}_{2,3,k}}(\alpha\beta^{-1})) \\
&=& \rot(\phi_{\mathcal{O}_{2,3,k}}(\beta)(\phi_{\mathcal{O}_{2,3,k}}(\alpha\beta))^{-1}(\phi_{\mathcal{O}_{2,3,k}}(\beta))^{-1}) \\
&=& -\rot(\phi_{\mathcal{O}_{2,3,k}}(\alpha\beta)).
\end{eqnarray*}
On the other hand $\hat{\phi}_{\mathcal{O}_{2,3,k}}$ and $\phi_{\mathcal{O}_{2,3}}^{(k)}$ do not belong to the same semi-conjugacy class. 
Indeed if they belonged the same conjugacy class, then they would be topologically conjugate by minimality. 
However this contradicts the fact that 
\begin{eqnarray*}
\hat{\phi}_{\mathcal{O}_{2,3,k}}((\alpha\beta)^k)=\id\neq\phi_{\mathcal{O}_{2,3}}^{(k)}((\alpha\beta)^k). 
\end{eqnarray*}

\item We can prove Theorem \ref{main} (1) by generalizing the notion of the bounded Euler number defined in \cite{BIW14} to actions of 2-orbifold groups. 
It will be indicated in a forthcoming paper together with generalizations of Theorem \ref{main} to actions of other 2-orbifold groups. 

\item Theorem \ref{main} can be considered as a weak analogue of the following classical theorem about linear character \cite{FK1897}, which we write in a specified form. 
Let $F\langle\alpha,\beta\rangle$ be a free group of rank two with a basis $\alpha, \beta$. 

\begin{Thm}
\label{FK}
Let $\phi,\psi \colon F\langle\alpha,\beta\rangle\to\SL(2,\R)$ be homomorphisms. If we have 
\begin{eqnarray*}
& &(\Tr(\phi(\alpha)),\Tr(\phi(\beta)),\Tr(\phi(\alpha\beta))) \\
&=&(\Tr(\psi(\alpha)),\Tr(\psi(\beta)),\Tr(\psi(\alpha\beta))) \\
&=&(x,y,z)
\end{eqnarray*}
with $x^2+y^2+z^2-xyz\neq 4$, then $\phi$ and $\psi$ are conjugate by an element of $\PSL(2,\R)$. 
\end{Thm}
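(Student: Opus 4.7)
The plan is to show that the hypothesis $x^{2}+y^{2}+z^{2}-xyz\neq 4$ is precisely the condition that the pair $(\phi(\alpha),\phi(\beta))$ is \emph{irreducible} (has no common fixed point on $\mathbb{C}\mathbb{P}^{1}$), and then to prove that an irreducible representation of $F\langle\alpha,\beta\rangle$ into $\SL(2,\R)$ is determined by the triple $(x,y,z)$ up to $\PSL(2,\R)$-conjugation.

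First, I would invoke the classical Fricke trace identity
\begin{equation*}
\Tr([A,B])=\Tr(A)^{2}+\Tr(B)^{2}+\Tr(AB)^{2}-\Tr(A)\Tr(B)\Tr(AB)-2,
\end{equation*}
valid for any $A,B\in\SL(2,\R)$, which rewrites the hypothesis as $\Tr([\phi(\alpha),\phi(\beta)])\neq 2$. A direct computation in a basis adapted to $A=\phi(\alpha)$, split into cases according to whether $A$ is semisimple or unipotent, shows that $\Tr([A,B])=2$ holds if and only if $A$ and $B$ share a common eigenvector in $\mathbb{C}^{2}$. Hence the hypothesis says exactly that both $\phi$ and $\psi$ are irreducible.

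Next, after a $\PSL(2,\R)$-conjugation I would normalize $A:=\phi(\alpha)$ to a canonical form determined by $x$: diagonal with eigenvalues $\lambda,\lambda^{-1}$ satisfying $\lambda+\lambda^{-1}=x$ when $|x|>2$; a standard unipotent (up to sign) when $|x|=2$; or a standard rotation by angle $\theta$ with $2\cos\theta=x$ when $|x|<2$. Writing $B:=\phi(\beta)$ entrywise as $B=(b_{ij})$, the two trace equations $\Tr(B)=y$ and $\Tr(AB)=z$, together with $\det(B)=1$, determine the diagonal entries $b_{11},b_{22}$ and the product $b_{12}b_{21}$ as explicit functions of $(x,y,z)$. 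The only remaining gauge freedom is the centralizer of $A$ in $\PSL(2,\R)$, a one-parameter subgroup that rescales the off-diagonal entries of $B$. Irreducibility guarantees that these entries are both nonzero, and one verifies case by case that the centralizer action reduces $B$ to a unique canonical form depending only on $(x,y,z)$. Performing the identical procedure on $\psi$ produces the same normal form, yielding a $\PSL(2,\R)$-conjugacy between $\phi$ and $\psi$.

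The main obstacle is the real-form subtlety at the final step: over $\mathbb{C}$ the centralizer action on the level set $\{b_{12}b_{21}=\mathrm{const}\}$ is visibly transitive, but over $\mathbb{R}$ — especially in the hyperbolic case, where the centralizer consists of real diagonal matrices and rescales $b_{12},b_{21}$ only by positive factors — one must check that the signs of the off-diagonal entries, constrained by $\det B=1$ and by the fixed value of $b_{12}b_{21}$, leave only one $\PSL(2,\R)$-orbit on the fiber. An efficient alternative is to pass to $\PSL(2,\C)$ and invoke Schur's lemma: irreducibility makes the stabilizer of $\phi$ in $\PSL(2,\C)$ trivial, so the claim reduces to identifying the real points within a single $\PSL(2,\C)$-orbit in the irreducible locus of the character variety.
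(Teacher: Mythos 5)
A preliminary remark on the comparison you asked for: the paper offers no proof of Theorem \ref{FK} at all --- it is quoted as a classical result of Fricke and Klein --- so there is no internal argument to measure your proposal against, and I can only judge it on its own terms.

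Your first steps are correct and standard: the commutator identity does convert the hypothesis into $\Tr([\phi(\alpha),\phi(\beta)])\neq 2$, which is equivalent to irreducibility over $\C$, and the trace data does determine the diagonal entries and the product of the off-diagonal entries of $\phi(\beta)$ once $\phi(\alpha)$ is normalized. The problem is the ``obstacle'' you flag at the end: it is not a deferred verification but a genuine obstruction, and the statement is false as written. Your own Schur's lemma reformulation makes this precise: for an irreducible pair, the intertwiner between two real representations with the same character triple is unique in $\mathrm{GL}(2,\C)$ up to scalars, so the centralizer of the pair in $\mathrm{GL}(2,\R)$ consists of real scalars, all of positive determinant; hence the single $\mathrm{GL}(2,\R)$-conjugacy class always splits into exactly two $\PSL(2,\R)$-classes, interchanged by conjugation by $D=\mathrm{diag}(1,-1)$. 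Concretely, let $\phi(\alpha)$ be the rotation matrix through angle $\pi/3$, let $\phi(\beta)=\mathrm{diag}(2,1/2)$, and let $\psi=D\phi D^{-1}$. Then $(x,y,z)=(1,5/2,5/4)$ and $x^2+y^2+z^2-xyz=91/16\neq 4$, yet $\psi(\alpha)$ is the rotation through $-\pi/3$, whose rotation number on $\partial\mathbb{H}^2\simeq\mathbb{S}^1$ is $2/3$ rather than $1/3$; since rotation number is a conjugacy invariant of $\Homeo_+(\mathbb{S}^1)$, no element of $\PSL(2,\R)$ conjugates $\phi$ to $\psi$. (In the elliptic and parabolic cases the failure already occurs for the single matrix $\phi(\alpha)$: when $|x|\le 2$ the trace does not determine the $\PSL(2,\R)$-class of $\phi(\alpha)$, so your list of canonical forms cannot be reached by $\PSL(2,\R)$-conjugation alone; also, for $|x|=2$ you must separately exclude $\phi(\alpha)=\pm E$, which irreducibility does.) What your argument proves, if carried through, is the correct classical statement: $\phi$ and $\psi$ are conjugate by an element of $\mathrm{GL}(2,\R)$, equivalently of $\mathrm{PGL}(2,\R)$. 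Obtaining a true $\PSL(2,\R)$-statement requires an additional orientation-sensitive invariant --- rotation numbers or the Euler class --- which is exactly the discrepancy between linear character and rotation number that the paper's Theorem \ref{main} is meant to illuminate.
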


\item When the author mentioned Theorem \ref{main} in his talk given in the conference ``Geometry and Foliations 2013'', E. Ghys informed us the following theorem about linear character. 

\begin{Thm}\cite[Example 8.2]{Hor72}
\label{Hor}
Let $F_m$ be a free group of rank $m\ge 2$. 
For every positive integer $n$, there exist mutually non-conjugate elements $w_1,\ldots,w_n$ of $F_m$ such that for every homomorphism $\phi\colon F_m\to\SL(2,\R)$, we have 
\begin{eqnarray*}
\Tr(\phi(w_1))=\cdots=\Tr(\phi(w_n)). 
\end{eqnarray*}
\end{Thm}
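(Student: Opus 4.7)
My plan is to reduce the theorem to $F_2$ by restriction, invoke Fricke's theorem to convert it into a polynomial identity, and then construct the required family using trace-preserving operations on $F_2$. For the reduction, choose any inclusion $F_2 \hookrightarrow F_m$ admitting a retraction $r \colon F_m \to F_2$. Two words in $F_2$ that are non-conjugate in $F_2$ remain non-conjugate in $F_m$ (otherwise $r$ would yield a conjugacy in $F_2$), and $\Tr(\phi(w_i)) = \Tr(\phi(w_j))$ for every $\phi \colon F_m \to \SL(2, \R)$ is equivalent, under restriction of $\phi$ along $F_2 \hookrightarrow F_m$, to the same equality for every $\phi \colon F_2 \to \SL(2, \R)$. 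So it suffices to produce the family inside $F_2$. By Fricke's theorem, for $F_2 = \langle a, b \rangle$ each $w \in F_2$ determines a polynomial $P_w \in \Z[x, y, z]$, depending only on the conjugacy class of $w$, with $\Tr(\phi(w)) = P_w(\Tr \phi(a), \Tr \phi(b), \Tr \phi(ab))$ for every $\phi$. The theorem thus reduces to exhibiting, for each $n$, $n$ distinct conjugacy classes $[w_1], \ldots, [w_n]$ in $F_2$ with $P_{w_1} = \cdots = P_{w_n}$ as polynomials.

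The baseline observation is that $[a, b]$ and $[a, b]^{-1}$ share the Fricke polynomial $x^2 + y^2 + z^2 - xyz - 2$ but are not conjugate in $F_2$ (the two cyclic words are manifestly distinct). I would then combine the involution $w \mapsto w^{-1}$ with further trace-preserving operations --- for instance the sign-change maps $a \mapsto a^{\pm 1}$, $b \mapsto b^{\pm 1}$; the reversal operation, justified by $\Tr(A) = \Tr(A^{T})$ and realized inside $\SL(2, \R)$ as conjugation by $\left(\begin{smallmatrix} 0 & -1 \\ 1 & 0 \end{smallmatrix}\right)$; and selected Nielsen substitutions whose induced action on the $\SL(2)$-character variety of $F_2$ is trivial --- and iterate a doubling construction in which a single conjugacy class is split at each stage into two non-conjugate but trace-equivalent classes. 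After $\lceil \log_2 n \rceil$ iterations this yields at least $n$ mutually non-conjugate elements sharing a common Fricke polynomial, from which the desired $w_1, \ldots, w_n$ are extracted.

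The main obstacle is verifying that this scheme genuinely produces $n$ distinct conjugacy classes, since the operations used are by design invisible to the Fricke polynomial and so come dangerously close to also preserving conjugacy. Distinguishing the resulting cyclic words requires finer invariants --- exponent sums under the abelianization $F_2 \to \Z^{2}$, the cyclic normal form and its length, or the Whitehead graph of a cyclic word --- together with a careful choice of the initial word and the substitutions at each stage to ensure that at least one of these invariants genuinely separates every pair of classes constructed.
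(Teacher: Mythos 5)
The paper does not actually prove this statement: Theorem \ref{Hor} is quoted from Horowitz \cite[Example 8.2]{Hor72} purely as background for Question \ref{Ghys}, so there is no internal argument to compare yours against, and I can only assess your proposal on its own terms. Your framework is right: the retraction argument reducing to $F_2$ is correct, Fricke's theorem correctly converts the problem into exhibiting $n$ pairwise non-conjugate words of $F_2$ with a common trace polynomial, and the pair $[a,b]$, $[a,b]^{-1}$ settles $n=2$. But the step that carries the entire content of the theorem --- getting from $2$ to arbitrary $n$ --- is not only left unexecuted, it cannot be executed with the tools you list. All of your ``universal trace-preserving operations'' collapse to two commuting involutions on conjugacy classes: the reversal of a word is exactly the inverse of its image under the elliptic involution $\epsilon\colon a\mapsto a^{-1},\ b\mapsto b^{-1}$ (so the $\Tr(A)=\Tr(A^{T})$ trick buys nothing beyond inversion and $\epsilon$), and the subgroup of $\mathrm{Out}(F_2)\cong\mathrm{GL}(2,\Z)$ acting trivially on the $\SL(2)$-character variety $\C^3$ is precisely $\{1,[\epsilon]\}$ --- any other class sends the conjugacy class of some primitive element $w_v$ to some $w_{Mv}$ with $Mv\neq\pm v$, and these have distinct trace polynomials, as one already sees on diagonal representations. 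Since inversion commutes with the $\mathrm{Out}(F_2)$-action on conjugacy classes, the group generated by your operations has order at most $4$, so the orbit of any seed word contains at most four conjugacy classes: the ``doubling construction'' halts after two steps no matter how the seed and the substitutions are chosen. The difficulty you flag at the end (certifying non-conjugacy of the output) is therefore not the main obstacle; the obstacle is that your scheme has no supply of trace coincidences past $n=4$.

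What a proof genuinely requires --- and what Horowitz supplies --- is a family of trace coincidences that is \emph{not} induced by any automorphism or anti-automorphism of $F_2$: explicit sequences of words (built from blocks $a^{p_i}$, $b^{q_j}$) whose Fricke polynomials are computed via $A^{p}=u_p(\Tr A)A+v_p(\Tr A)I$ and seen to agree because the coefficients are symmetric under certain rearrangements of the exponents, while the corresponding cyclic words remain pairwise distinct even after allowing inversion and $\epsilon$. Any completion of your argument must either carry out such a computation or cite one; nothing in the proposal as written substitutes for it.
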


After that, he asked the following question. 

\begin{Quest}
\label{Ghys}
Does the following analogue of Theorem \ref{Hor} hold for $\Homeo_+(\mathbb{S}^1)$? 
Namely, for every positive integer $m\ge2$ and every positive integer $n$, does there exist mutually non-conjugate elements $w_1,\ldots,w_n$ of $F_m$ such that for every homomorphism $\phi\in\Rep(F_m)$, we have 
\begin{eqnarray*}
\rot(\phi(w_1))=\cdots=\rot(\phi(w_n))\ ?
\end{eqnarray*}
\end{Quest}

Note that D. Calegari asked this question for the case where $m=2, n=2$ and $w_2$ is fixed as the identity element \cite{Cal12}. 
\end{enumerate}
\end{Rem}

\section{Proof of Theorem \ref{main}}
For $r_1, r_2, r_3\in\R\slash\Z$, we put 
\begin{eqnarray*}
& &\Rep(r_1, r_2, r_3) \\
&=& \{\phi\in\Rep(\pi_1^{orb}(\mathcal{O}_{2,3})) ~|~ (\rot(\phi(\alpha)), \rot(\phi(\beta)), \rot(\phi(\alpha\beta))=(r_1, r_2, r_3)\}. 
\end{eqnarray*}

\subsection{Proof of (1)}
Let $\phi\in\Rep\left(\dfrac{1}{2},\dfrac{1}{3}, 0\right)$. 
The following sufficient condition for belonging to the same semi-conjugacy class given in \cite{Man14} is a corollary of a criterion in \cite{Mat86}. 

\begin{Prop}
\cite[Corollary 7.5]{Man14}
\label{rotconn}
Let $\Gamma$ be a group and $U\subset\Rep(\Gamma)$ be connected. 
Suppose that $\rot(\phi_1(\gamma))=\rot(\phi_2(\gamma))$ for every $\phi_1,\phi_2\in U$ and every $\gamma\in\Gamma$, then $U$ is contained in a single semi-conjugacy class.
\end{Prop}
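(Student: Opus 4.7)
The plan is to combine the constancy of rotation numbers on $U$ with Matsumoto's cocycle characterization of semi-conjugacy from \cite{Mat86}: once the rotation-number function $\gamma \mapsto \rot(\phi(\gamma))$ is locked on $U$, one can canonically lift each $\phi(\gamma)$ to $\widetilde{\Homeo}_+(\mathbb{S}^1)$ in a way that varies continuously in $\phi$, so the resulting integer-valued Euler cocycle is forced by connectedness to be constant on $U$.

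For the canonical lift, set $r(\gamma) := \rot(\phi(\gamma)) \in \R\slash\Z$; this is independent of $\phi \in U$ by hypothesis. Fix once and for all an arbitrary set-theoretic lift $\hat r \colon \Gamma \to \R$ with $\hat r(\gamma) \bmod \Z = r(\gamma)$. Because lifts of any $f \in \Homeo_+(\mathbb{S}^1)$ to $\widetilde{\Homeo}_+(\mathbb{S}^1)$ have translation numbers forming a single $\Z$-coset, there is exactly one lift $\tilde\phi(\gamma) \in \widetilde{\Homeo}_+(\mathbb{S}^1)$ of $\phi(\gamma)$ with $\widetilde{\rot}(\tilde\phi(\gamma)) = \hat r(\gamma)$. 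This gives, for each $\phi \in U$, a set map $\tilde\phi \colon \Gamma \to \widetilde{\Homeo}_+(\mathbb{S}^1)$; its failure to be a homomorphism is encoded in the $\Z$-valued 2-cocycle
\[
c_\phi(\gamma_1,\gamma_2) := \tilde\phi(\gamma_1\gamma_2)^{-1}\,\tilde\phi(\gamma_1)\,\tilde\phi(\gamma_2) \in \Z \subset \widetilde{\Homeo}_+(\mathbb{S}^1).
\]

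Next I would argue that $\phi \mapsto c_\phi(\gamma_1,\gamma_2)$ is constant on $U$ for every pair $(\gamma_1,\gamma_2)$. Fix $\gamma$: since $\widetilde{\Homeo}_+(\mathbb{S}^1) \to \Homeo_+(\mathbb{S}^1)$ is a $\Z$-covering, the lifts of $\phi(\gamma)$ have translation numbers that are integer translates of one another, while $\widetilde{\rot}$ is continuous on $\widetilde{\Homeo}_+(\mathbb{S}^1)$. Consequently, the normalization $\widetilde{\rot}(\tilde\phi(\gamma)) = \hat r(\gamma)$, whose right-hand side is a fixed real number, picks out a single sheet that varies continuously with $\phi$. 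Thus $\phi \mapsto \tilde\phi(\gamma)$ is continuous, and so is the composite $\phi \mapsto c_\phi(\gamma_1,\gamma_2) \in \Z$; being continuous into a discrete group, it is locally constant, hence constant by connectedness of $U$. Matsumoto's criterion from \cite{Mat86} — of which Proposition~\ref{rotconn} is the corollary stated in \cite{Man14} — then identifies the semi-conjugacy class of a representation with the data of this normalized integer Euler cocycle, so all $\phi \in U$ are mutually semi-conjugate.

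The main obstacle is the continuity claim for $\phi \mapsto \tilde\phi(\gamma)$: the $\Z$-covering $\widetilde{\Homeo}_+(\mathbb{S}^1) \to \Homeo_+(\mathbb{S}^1)$ admits no global continuous section, so selecting a specific lift requires a careful argument using the translation number to pin down the correct sheet. It is precisely the \emph{exact} constancy of $\rot(\phi(\gamma)) \in \R\slash\Z$, rather than any weaker local condition, that prevents the chosen sheet from jumping as $\phi$ varies in $U$; without this, the cocycle $c_\phi$ could change by a coboundary along a path in $\Rep(\Gamma)$, and the discrete rigidity argument would collapse.
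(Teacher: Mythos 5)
The paper does not actually prove this proposition --- it is quoted from \cite{Man14} (Corollary 7.5) and described only as ``a corollary of a criterion in \cite{Mat86}'' --- so there is no internal proof to compare against; your argument is precisely that intended derivation. Your steps are sound: the exact constancy of $\rot(\phi(\gamma))$ on $U$ is indeed what makes the translation-number-normalized lift $\tilde\phi(\gamma)$ depend continuously on $\phi$ (locally it agrees with a continuous section of the $\Z$-covering, whose translation number stays in $\bigl(\hat r(\gamma)-1,\hat r(\gamma)+1\bigr)\cap\bigl(\hat r(\gamma)+\Z\bigr)=\{\hat r(\gamma)\}$), whence the integer cocycle $c_\phi$ is continuous into a discrete set and constant on the connected set $U$. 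The one slight imprecision is the closing claim that Matsumoto's criterion identifies the semi-conjugacy class with ``the data of this normalized integer Euler cocycle'': the cocycle alone does not determine the class (one also needs the rotation numbers of individual elements, e.g.\ already for $\Gamma=\Z$), but since those are constant on $U$ by hypothesis --- and are encoded in your fixed lift $\hat r$ --- the conclusion stands.
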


In view of Proposition \ref{rotconn}, it suffices to show the following. 

\begin{Lem}
\label{rot}
$\rot(\phi(\gamma))=\rot(\phi_{\mathcal{O}_{2,3}}(\gamma))$ for every $\gamma\in\pi_1^{orb}(\mathcal{O}_{2,3})$. 
\end{Lem}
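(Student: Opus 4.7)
Write $a = \phi(\alpha)$, $b = \phi(\beta)$, and choose lifts $\tilde a, \tilde b \in \widetilde{\Homeo}_+(\mathbb{S}^1)$ with $\widetilde{\rot}(\tilde a) = 1/2$ and $\widetilde{\rot}(\tilde b) = 1/3$. Because $a^2 = \id$ and $b^3 = \id$, and doubling (resp.\ tripling) these translation numbers yields an integer, we automatically obtain $\tilde a^2 = T = \tilde b^3$, where $T$ is the unit-translation generator of the center of $\widetilde{\Homeo}_+(\mathbb{S}^1)$. Thus $\alpha \mapsto \tilde a$, $\beta \mapsto \tilde b$ extends to a representation of the three-strand braid group $B_3 = \langle \tilde\alpha, \tilde\beta \mid \tilde\alpha^2 = \tilde\beta^3 \rangle$ into $\widetilde{\Homeo}_+(\mathbb{S}^1)$, and every $\gamma \in \pi_1^{orb}(\mathcal{O}_{2,3})$ acquires a canonical lift $\widetilde{\phi(\gamma)}$ by reading any word representing $\gamma$ in $\tilde a, \tilde b$.

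The plan proceeds in two stages. First, pin down $\widetilde{\rot}(\tilde a \tilde b)$. The hypothesis $\rot(\phi(\alpha\beta)) = 0$ gives $\widetilde{\rot}(\tilde a \tilde b) \in \Z$, and the strict defect bound $|\widetilde{\rot}(\tilde a \tilde b) - 5/6| < 1$ for the translation-number quasi-morphism narrows this to $\{0, 1\}$. To rule out $1$, I would pick a fixed point $x_0$ of $ab$ and a lift $\tilde x_0 \in \R$, note that $b(x_0) = a(x_0)$ (since $a^2 = \id$), and use $\tilde a^2 = T$ together with the displacement inequalities for $\tilde a$ and $\tilde b$ at $\tilde x_0$ to squeeze $\tilde a \tilde b(\tilde x_0) - \tilde x_0 \in [0,1)$, forcing the integer value to be $0$.

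Second, propagate to all of $\pi_1^{orb}(\mathcal{O}_{2,3})$. Elements conjugate to $\alpha, \beta$, or $\beta^{-1}$ contribute rotation numbers $1/2, 1/3, 2/3$ immediately, matching Fuchsian. For every cyclically reduced non-elliptic word $w = \alpha\beta^{e_1}\cdots\alpha\beta^{e_n}$ with $n\geq 1$ and $e_j \in \{1, 2\}$, Fuchsian theory gives $\rot(\phi_{\mathcal{O}_{2,3}}(w)) = 0$, because the corresponding matrix in $\PSL(2,\Z)$ is parabolic or hyperbolic. I would prove the matching statement $\rot(\phi(w)) = 0$ by induction on $n$: use $\tilde b^{-1} = T^{-1}\tilde b^2$ and $\tilde a^{-1} = T^{-1}\tilde a$ to convert any expression into a positive word in $\tilde a, \tilde b$ modulo a central factor, and then apply sharp Calegari--Walker-style upper and lower bounds on $\widetilde{\rot}$ of positive words specialized at the known values $(\widetilde{\rot}(\tilde a), \widetilde{\rot}(\tilde b), \widetilde{\rot}(\tilde a\tilde b)) = (1/2, 1/3, 0)$; at these boundary parameters the two bounds should collapse to the Fuchsian value.

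The step I expect to be most delicate is this last one. The naive quasi-morphism defect bound alone constrains $\widetilde{\rot}$ only within a unit interval, so the proof needs the sharper control afforded by the relations $\tilde a^2 = \tilde b^3$ of $B_3$ combined with the explicit value of $\widetilde{\rot}(\tilde a\tilde b)$ to pin each word's translation number to a unique integer. The Calegari--Walker framework cited in the introduction strongly suggests this as the intended route, and the crux of the argument is verifying that the upper and lower ziggurat estimates at $(1/2, 1/3, 0)$ indeed coincide, word by word, with the Fuchsian values computed in $\PSL(2,\Z)$.
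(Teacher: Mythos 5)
There is a genuine gap here, and also a concrete error. The error first: with your choice of lifts ($\widetilde{\rot}(\tilde a)=\tfrac12$, $\widetilde{\rot}(\tilde b)=\tfrac13$), both $\tilde a$ and $\tilde b$ move every point strictly to the right, so $\tilde a\tilde b(\tilde x)>\tilde b(\tilde x)>\tilde x$ and hence $\widetilde{\rot}(\tilde a\tilde b)\ge\widetilde{\rot}(\tilde b)=\tfrac13$; combined with $\tilde a(\tilde y)<\tilde y+1$ this gives $\widetilde{\rot}(\tilde a\tilde b)\in[\tfrac13,\tfrac43]$, so the integer value forced by $\rot(\phi(\alpha\beta))=0$ is $1$, not $0$. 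Your proposed squeeze of $\tilde a\tilde b(\tilde x_0)-\tilde x_0$ into $[0,1)$ is therefore impossible, and this is not a mere convention issue: every subsequent estimate you would specialize at the triple $(\tfrac12,\tfrac13,0)$ sits at the wrong point of the parameter space.

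More seriously, your second stage is not a proof. The assertion that the upper and lower Calegari--Walker-style bounds ``collapse to the Fuchsian value, word by word'' at the boundary parameters is precisely the content of the lemma, and you leave it unverified (as you yourself acknowledge); the quasimorphism defect bound only localizes $\widetilde{\rot}$ of a word to within a unit interval, which does not yield $\rot(\phi(w))=0$. The paper closes exactly this gap with an elementary ping-pong configuration: since $\widetilde{\rot}(\tilde a\tilde b)=1$, there is $\tilde x_0$ with $\tilde a\tilde b(\tilde x_0)=\tilde x_0+1$, whence $\tilde a(\tilde x_0)=\tilde b(\tilde x_0)$ and $\tilde x_0<\tilde b(\tilde x_0)<\tilde b^2(\tilde x_0)<\tilde x_0+1$; putting $I=p([\tilde x_0,\tilde b(\tilde x_0)])$ and $J=p([\tilde b(\tilde x_0),\tilde x_0+1])$ one checks $\phi(\alpha)(J)=I$ and $\phi(\beta^{\pm1})(I)\subset J$, so every word conjugate to $\alpha\beta^{e_1}\cdots\alpha\beta^{e_n}$ maps $I$ into itself, has a fixed point, and thus has rotation number $0$, matching the Fuchsian value for non-elliptic elements. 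To make your route work you would need to supply a mechanism of this kind; the ziggurat framework does not by itself deliver the collapse of the bounds.
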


\begin{Lem}
\label{pathconn}
The space $\Rep\left(\dfrac{1}{2},\dfrac{1}{3}, 0\right)$ is path-connected.
\end{Lem}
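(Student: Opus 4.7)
The plan is to connect an arbitrary $\phi\in\Rep\bigl(\tfrac{1}{2},\tfrac{1}{3},0\bigr)$ to $\phi_{\mathcal{O}_{2,3}}$ by a path built in three successive stages of progressive normalization, each a continuous deformation inside $\Rep\bigl(\tfrac{1}{2},\tfrac{1}{3},0\bigr)$. First, since $\phi(\alpha)$ is an orientation-preserving involution with rotation number $1/2$, it is fixed-point-free and hence topologically conjugate to $\iota_0:=\phi_{\mathcal{O}_{2,3}}(\alpha)$; using path-connectedness of $\Homeo_+(\mathbb{S}^1)$, I would conjugate $\phi$ along a path from $\id$ to such a conjugator. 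Rotation numbers are conjugation-invariant, so this path stays in $\Rep\bigl(\tfrac{1}{2},\tfrac{1}{3},0\bigr)$ and ends at a representation with $\phi(\alpha)=\iota_0$.

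Next I would normalize a three-point orbit of $\phi(\beta)$. Any fixed point $p$ of $\phi(\alpha\beta)$ satisfies $\phi(\beta)(p)=\iota_0(p)$, and together with $q:=\phi(\beta)^2(p)$ produces a 3-cycle of $\phi(\beta)$ consisting of three distinct points appearing in exactly that cyclic order on $\mathbb{S}^1$ (forced by orientation-preservation). The centralizer $Z(\iota_0)$ of $\iota_0$ in $\Homeo_+(\mathbb{S}^1)$ is path-connected, being a $\Z/2$-extension of $\Homeo_+(\mathbb{S}^1/\langle\iota_0\rangle)$. Inside $Z(\iota_0)$ I can find, and join to $\id$ by a path, a homeomorphism carrying $(p,\iota_0(p),q)$ to the Fuchsian triple $(0,\infty,-1)$: a rotation commuting with $\iota_0$ sends $p$ to $0$ and hence $\iota_0(p)$ to $\infty$, after which the cyclic-order constraint places $q$ on the negative reals, and a homeomorphism fixing $\{0,\infty\}$ and commuting with $\iota_0$, freely built from any orientation-preserving self-homeomorphism of $(0,\infty)$ via the relation $h(-1/z)=-1/h(z)$, moves $q$ to $-1$. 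Conjugating along this path keeps $\phi(\alpha)=\iota_0$ and produces a representation whose $\phi(\beta)$ agrees with $\phi_{\mathcal{O}_{2,3}}(\beta)$ on $\{0,\infty,-1\}$.

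Finally, with $\phi(\alpha)$ and the 3-orbit $\{0,\infty,-1\}$ fixed, $\phi(\beta)$ is determined by a pair of orientation-preserving homeomorphisms $\psi_1\colon J_1\to J_2$ and $\psi_2\colon J_2\to J_3$ on the three complementary arcs $J_i$ of $\mathbb{S}^1\setminus\{0,\infty,-1\}$, with $\phi(\beta)|_{J_3}=(\psi_2\psi_1)^{-1}$ forced by $\phi(\beta)^3=\id$. The space of such pairs is contractible by linear interpolation of monotone parameterizations, supplying the final leg of the path to the Fuchsian representation. Throughout this last stage the rotation numbers of $\phi(\alpha)$ and $\phi(\beta)$ are automatic, and the point $0$ remains a fixed point of $\phi(\alpha\beta)$, so $\rot(\phi(\alpha\beta))=0$ is also preserved. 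The main obstacle I anticipate is the second stage: verifying that $Z(\iota_0)$ is both path-connected and rich enough to normalize the orbit triple without altering $\phi(\alpha)$ relies on a careful $\Z/2$-equivariant analysis of the circle around the involution $\iota_0$.
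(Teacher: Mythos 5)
Your proposal is correct and follows essentially the same strategy as the paper's proof: normalize one finite-order generator to a standard model by conjugating along a path (rotation numbers being conjugation-invariant), pin down the extra data coming from the fixed point of $\phi(\alpha\beta)$, and then contract the remaining generator through a convex space of finite-order homeomorphisms with prescribed orbit data. The only difference is that the paper rigidifies $\beta$ (making it the rotation by $\tfrac{1}{3}$ and arranging $(\tilde a_t\tilde b_t)(0)=1$) and interpolates the involutions $\tilde a_t$, whereas you rigidify $\alpha$ (as $\iota_0$, at the cost of the extra but correct claim that its centralizer is path-connected) and interpolate $\beta$ — a dual choice that changes no essential content.
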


\begin{proof}[Proof of \textup{Lemma \ref{rot}}]

We denote by $\tilde{a}$ (resp.\ $\tilde{b}$) the lift of $\phi(\alpha)$ (resp.\ $\phi(\beta)$) with $\widetilde{\rot}(\tilde{a})=\dfrac{1}{2}$ $\left({\rm{resp.}}\ \widetilde{\rot}(\tilde{b})=\dfrac{1}{3}\right)$. 
Since $0<\widetilde{\rot}(\tilde{a})<1$, we have
\begin{eqnarray*}
\tilde{x}<\tilde{a}(\tilde{x})<\tilde{x}+1
\end{eqnarray*}
for every $\tilde{x}\in\mathbb{R}$. 
Hence we have
\begin{eqnarray*}
\tilde{b}(\tilde{x})<(\tilde{a}\tilde{b})(\tilde{x})<\tilde{b}(\tilde{x})+1
\end{eqnarray*}
for every $\tilde{x}\in\mathbb{R}$. 
This implies that
\begin{eqnarray*}
\dfrac{1}{3}=\widetilde{\rot}(\tilde{b})\le\widetilde{\rot}(\tilde{a}\tilde{b})\le\widetilde{\rot}(\tilde{b})+1=\dfrac{4}{3}. 
\end{eqnarray*}
Since $\rot(\phi(\alpha\beta))=0$, we have $\widetilde{\rot}(\tilde{a}\tilde{b})=1$. 
Then there exists a point $\tilde{x_0}\in\R$ such that $(\tilde{a}\tilde{b})(\tilde{x}_0)=\tilde{x}_0+1$. 
Since both $\tilde{a}^2$ and $\tilde{b}^3$ are the translation by one, we have
\begin{eqnarray*}
\tilde{x}_0<\tilde{a}(\tilde{x}_0)=\tilde{b}(\tilde{x}_0)<\tilde{b}^2(\tilde{x}_0)<\tilde{x}_0+1. 
\end{eqnarray*}
We put 
\begin{eqnarray*}
I &=& p([\tilde{x}_0,\tilde{b}(\tilde{x}_0)]\ {\textrm{and}} \\
J &=& p([\tilde{b}(\tilde{x}_0),\tilde{x}_0+1]). 
\end{eqnarray*}
Then we have 
\begin{eqnarray*}
\phi(\alpha)(J) &=& I \ {\textrm{and}} \\
\phi(\beta^{\pm 1})(I) & \subset & J. 
\end{eqnarray*}

We claim that if $\gamma\in\Gamma$ is not conjugate to a power of $\alpha, \beta$, then there exists a closed interval $K\subset \mathbb{S}^1$ such that $\phi(\gamma)(K)\subset K$. 
Indeed by taking conjugates if necessary, we may assume that $\gamma=\alpha\beta^{e_1}\cdots \alpha\beta^{e_n}$, where $e_i\in{\pm 1}$ for $i\in\{1,\ldots,n\}$. 
Then we have $\phi(\gamma)(I)\subset I$. 

This implies that if $\gamma$ is not conjugate to a power of $\alpha, \beta$, then $\rot(\phi(\gamma))=0$. 
This finishes the proof of the lemma.
\end{proof}

\begin{proof}[Proof of \textup{Lemma \ref{pathconn}}]
Let $\phi_0,\phi_1\in\Rep\left(\dfrac{1}{2},\dfrac{1}{3}, 0\right)$. 
We show that there exists a path in $\Rep\left(\dfrac{1}{2},\dfrac{1}{3}, 0\right)$ from $\phi_0$ to $\phi_1$. 
For $t\in\{0,1\}$, we denote by $\tilde{a}_t$ (resp.\ $\tilde{b}_t$) the lift of $\phi_t(\alpha)$ (resp.\ $\phi_t(\beta)$) with $\widetilde{\rot}(\tilde{a}_t)=\dfrac{1}{2}$\ $\left({\rm{resp.}}\ \widetilde{\rot}(\tilde{b}_t)=\dfrac{1}{3}\right)$. 
By taking conjugates, we may assume that both $\phi_0(b)$ and $\phi_1(b)$ are the rotation by $\dfrac{1}{3}$, and that $(\tilde{a}_t\tilde{b}_t)(0)=1$ for $t\in\{0,1\}$. 
We take a path $\{\tilde{a}_t\}_{t\in[0,1]}$ in $\widetilde{\Homeo}_+(\mathbb{S}^1)$ from $\tilde{a}_0$ to $\tilde{a}_1$ such that $(\tilde{a}_t)\left(\dfrac{1}{3}\right)=1$ and $(\tilde{a}_t)^2$ is the translation by one. 
We denote by $a_t\in\Homeo_+(\mathbb{S}^1)$ the projection of $\tilde{a}_t$. 
Then the path $\{\phi_t\}_{t\in[0,1]}$ in $\Rep\left(\dfrac{1}{2},\dfrac{1}{3}, 0\right)$ defined by the condition that $\phi_t(\alpha)=a_t$ and $\phi_t(\beta)$ is the rotation by $\dfrac{1}{3}$ is a desired one. 
\end{proof}

\subsection{Proof of (2)}
Let $\phi\in\Rep\left(\dfrac{1}{2},\dfrac{2}{3}, \dfrac{1}{5}\right)$. 
Then $\phi$ has no finite orbits. 
In fact if there were finite orbits, then the map $\rot\circ\phi \colon \Z_2\ast\Z_3 \to \R\slash\Z$ must be a homomorphism, which is impossible since $\rot(\phi(\alpha))=\dfrac{1}{2}$, $\rot(\phi(\beta))=\dfrac{2}{3}$ and $\rot(\phi(\alpha\beta))=\dfrac{1}{5}$. 
Therefore the action $\phi$ admits a unique minimal set, either a Cantor set or the whole circle. 
Passing to a semi-conjugate action, we may assume the latter, that is, the action is minimal.

By Theorem \ref{main} (1), it suffices to show that $\phi$ is the 5-fold lift of some action, namely, there exists a homeomorphism $\theta\in\Homeo_+(\mathbb{S}^1)$ which is $\phi(\pi_1^{orb}(\mathcal{O}_{2,3}))$-equivariant and periodic of period 5. 

We denote by $\tilde{a}$ (resp. $\tilde{b}$) the lift of $\phi(\alpha)$ (resp. $\phi(\beta)$) with $\widetilde{\rot}(\tilde{a})=\dfrac{1}{2}$\ $\left({\rm{resp.}}\ \widetilde{\rot}(\tilde{b})=\dfrac{2}{3}\right)$. 
Since $0<\widetilde{\rot}(\tilde{a})<1$, we have
\begin{eqnarray*}
\tilde{x}<\tilde{a}(\tilde{x})<\tilde{x}+1
\end{eqnarray*}
for every $\tilde{x}\in\mathbb{R}$. 
Hence we have
\begin{eqnarray*}
\tilde{b}(\tilde{x})<(\tilde{a}\tilde{b})(\tilde{x})<\tilde{b}(\tilde{x})+1
\end{eqnarray*}
for every $\tilde{x}\in\mathbb{R}$. 
This implies that
\begin{eqnarray*}
\dfrac{2}{3}=\widetilde{\rot}(\tilde{b})\le\widetilde{\rot}(\tilde{a}\tilde{b})\le\widetilde{\rot}(\tilde{b})+1=\dfrac{5}{3}. 
\end{eqnarray*}
Since $\rot(\phi(\alpha\beta))=\dfrac{1}{5}$, we have $\widetilde{\rot}(\tilde{a}\tilde{b})=\dfrac{6}{5}$. 
We denote by $\widetilde{ab}$ the lift of $\phi(\alpha\beta)$ with $\widetilde{\rot}(\widetilde{ab})=\dfrac{1}{5}$. 
Then there exists a point $\tilde{x_0}\in\R$ such that $(\widetilde{ab})^5(\tilde{x}_0)=\tilde{x}_0+1$. 
Note that $\tilde{a}\tilde{b}(\tilde{x})=\widetilde{ab}(\tilde{x})+1$ for every $\tilde{x}\in\R$. 
\begin{Lem}
\label{ineq}
We have the following.
\begin{eqnarray*}
&(1)& 
\tilde{a}(\tilde{x})
<\tilde{b}(\tilde{x})
\ for\ every\ \tilde{x}\in\R. \\
&(2)& 
(\widetilde{ab})^2\tilde{a}(\tilde{x})
<\tilde{x}+1\ for\ every\ \tilde{x}\in\R. \\
&(3)& 
(\widetilde{ab})^l(\tilde{x}_0)
<\tilde{b}(\widetilde{ab})^{l+2}(\tilde{x}_0)-1
<\tilde{b}^2(\widetilde{ab})^{l+4}(\tilde{x}_0)-2
<(\widetilde{ab})^{l+1}(\tilde{x}_0) \\
& & 
for\ every\ l\in\Z. 
\end{eqnarray*}
\end{Lem}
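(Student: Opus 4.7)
For (1), the key point is that $\widetilde{\rot}(\widetilde{ab})=1/5\notin\Z$ forces $\widetilde{ab}$ to have no fixed point, and since the rotation number is positive this gives $\widetilde{ab}(\tilde{x})>\tilde{x}$ for every $\tilde{x}\in\R$ (otherwise $\widetilde{ab}-\mathrm{id}$ would be everywhere $\le 0$, forcing $\widetilde{\rot}(\widetilde{ab})\le 0$). Suppose for contradiction $\tilde{a}(\tilde{x})\ge\tilde{b}(\tilde{x})$ at some point. Applying the order-preserving $\tilde{a}$, using that $\tilde{a}^2$ is translation by $1$, and using $\tilde{a}\tilde{b}(\tilde{x})=\widetilde{ab}(\tilde{x})+1$, one obtains $\tilde{x}+1\ge\widetilde{ab}(\tilde{x})+1$, hence $\widetilde{ab}(\tilde{x})\le\tilde{x}$, contradicting the observation.

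For (2), since $(\widetilde{ab})^{2}(\tilde{u})=(\tilde{a}\tilde{b})^{2}(\tilde{u})-2$ for every $\tilde{u}$, the desired inequality is equivalent to $\tilde{a}\tilde{b}\tilde{a}\tilde{b}\tilde{a}(\tilde{y})<\tilde{y}+3$. Applying $\tilde{a}^{-1}$ on the left (which sends $\tilde{u}$ to $\tilde{a}(\tilde{u})-1$, because $\tilde{a}^2$ is translation by $1$) and setting $\tilde{z}=\tilde{a}(\tilde{y})$, it suffices to show $\tilde{b}\tilde{a}\tilde{b}(\tilde{z})<\tilde{z}+2$. But (1) applied at $\tilde{b}(\tilde{z})$ gives $\tilde{a}\tilde{b}(\tilde{z})<\tilde{b}^{2}(\tilde{z})$, so applying the order-preserving $\tilde{b}$ and using that $\tilde{b}^3$ is translation by $2$ yields $\tilde{b}\tilde{a}\tilde{b}(\tilde{z})<\tilde{b}^{3}(\tilde{z})=\tilde{z}+2$.

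For (3), I first reformulate (2) in the equivalent form $(\widetilde{ab})^{2}(\tilde{y})<\tilde{a}(\tilde{y})$ for all $\tilde{y}$, by the substitution $\tilde{y}\mapsto\tilde{a}^{-1}(\tilde{y})$. Writing $\tilde{x}_k:=(\widetilde{ab})^{k}(\tilde{x}_0)$, so $\tilde{x}_{k+5}=\tilde{x}_k+1$, I derive the pivotal orbit identity $\tilde{b}(\tilde{x}_k)=\tilde{a}(\tilde{x}_{k+1})$ from $\tilde{a}\tilde{b}(\tilde{x}_k)=\tilde{x}_{k+1}+1$ together with $\tilde{a}^{-1}(\tilde{u})=\tilde{a}(\tilde{u})-1$. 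Using this identity once rewrites $\tilde{b}(\tilde{x}_{l+2})$ as $\tilde{a}(\tilde{x}_{l+3})$, and using it twice together with $\tilde{x}_{k+5}=\tilde{x}_k+1$ rewrites $\tilde{b}^{2}(\tilde{x}_{l+4})$ as $\tilde{b}\tilde{a}(\tilde{x}_l)+1$. The chain (3) thereby reduces to
\begin{equation*}
\tilde{x}_l+1<\tilde{a}(\tilde{x}_{l+3})<\tilde{b}\tilde{a}(\tilde{x}_l)<\tilde{x}_{l+1}+1,
\end{equation*}
and each of the three inequalities follows from a single application of the reformulated (2): at $\tilde{y}=\tilde{x}_{l+3}$ for the first; at $\tilde{y}=\tilde{x}_l$ followed by applying the order-preserving $\tilde{b}$ for the second; and at $\tilde{y}=\tilde{x}_{l+2}$ for the third, after rewriting $\tilde{b}\tilde{a}(\tilde{x}_l)=\tilde{b}^{2}(\tilde{x}_{l-1})=\tilde{b}^{-1}(\tilde{x}_{l-1})+2$ via $\tilde{b}^3$ being translation by $2$ and then applying $\tilde{b}$.

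The main obstacle I expect is spotting the equivalent form $(\widetilde{ab})^{2}(\tilde{y})<\tilde{a}(\tilde{y})$ of (2) and the orbit identity $\tilde{b}(\tilde{x}_k)=\tilde{a}(\tilde{x}_{k+1})$; once both are available, the entire chain (3) collapses into three instances of the same inequality, and the remaining bookkeeping uses only that $\tilde{a}^2$ and $\tilde{b}^3$ are integer translations together with the defining relation $\tilde{a}\tilde{b}=\widetilde{ab}+1$.
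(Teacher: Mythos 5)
Your proof is correct and takes essentially the same route as the paper: part (1) from the fact that $\widetilde{\rot}(\tilde{a}\tilde{b})>1$ forces $\tilde{a}\tilde{b}(\tilde{x})>\tilde{x}+1=\tilde{a}^2(\tilde{x})$ pointwise, part (2) from (1) together with the relations that $\tilde{a}^2$ and $\tilde{b}^3$ are translations by $1$ and $2$, and part (3) by substituting points of the $\widetilde{ab}$-orbit of $\tilde{x}_0$ into (2). Your reformulation $(\widetilde{ab})^2(\tilde{y})<\tilde{a}(\tilde{y})$ and the identity $\tilde{b}(\tilde{x}_k)=\tilde{a}(\tilde{x}_{k+1})$ are just a cleaner repackaging of the same substitutions the paper performs.
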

\begin{proof}
(1) Since $\widetilde{\rot}(\tilde{a}\tilde{b})=\dfrac{6}{5}>1$, we have 
\begin{eqnarray*}
\tilde{a}^2(\tilde{x})=\tilde{x}+1<\tilde{a}\tilde{b}(\tilde{x})
\end{eqnarray*}
for every $\tilde{x}\in\R$. 
This implies the desired inequality. \\
(2) It follows from (1) that for every $\tilde{x}\in\R$ we have 
\begin{eqnarray*}
(\widetilde{ab})^2\tilde{a}(\tilde{x})
=(\tilde{a}\tilde{b})^2\tilde{a}(\tilde{x})-2
<\tilde{a}\tilde{b}^3\tilde{a}(\tilde{x})-2
=\tilde{a}^2(\tilde{x})
=\tilde{x}+1. \\
\end{eqnarray*}
(3) By substituting $\tilde{b}(\widetilde{ab})^{l+2}(\tilde{x}_0)$ for $\tilde{x}$ in inequality (2), it follows that 
\begin{eqnarray*}
(\widetilde{ab})^2\tilde{a}\tilde{b}(\widetilde{ab})^{l+2}(\tilde{x}_0)
<\tilde{b}(\widetilde{ab})^{l+2}(\tilde{x}_0)+1.
\end{eqnarray*}
Since we have 
\begin{eqnarray*}
(\widetilde{ab})^2\tilde{a}\tilde{b}(\widetilde{ab})^2((\widetilde{ab})^l(\tilde{x}_0))
=(\widetilde{ab})^5((\widetilde{ab})^l(\tilde{x}_0))+1
=(\widetilde{ab})^l(\tilde{x}_0)+2,
\end{eqnarray*}
we obtain the first inequality. 
Since $l\in\Z$ is an arbitrary integer, it follows that
\begin{eqnarray*}
(\widetilde{ab})^{l+2}(\tilde{x}_0)
<\tilde{b}(\widetilde{ab})^{l+4}(\tilde{x}_0)-1.
\end{eqnarray*}
This implies the second inequality. 
Similarly we have
\begin{eqnarray*}
(\widetilde{ab})^{l+4}(\tilde{x}_0)
<\tilde{b}(\widetilde{ab})^{l+6}(\tilde{x}_0)-1
=\tilde{b}(\widetilde{ab})^{l+1}(\tilde{x}_0).
\end{eqnarray*}
This implies the third inequality.
\end{proof}

The following lemma follows from Lemma \ref{ineq} (3) and the equality $\tilde{a}(\widetilde{ab})^l(\tilde{x}_0)=\tilde{b}(\widetilde{ab})^{l+4}(\tilde{x}_0)-1$. 
\begin{Lem}
\label{Markov}
For every integer $l\in\Z$, we put 
\begin{eqnarray*}
\tilde{I}_l &=& ((\widetilde{ab})^l(\tilde{x}_0), (\tilde{b}(\widetilde{ab})^{l+2})(\tilde{x}_0)-1]\ {\textrm{and}} \\
\tilde{J}_l &=& ((\tilde{b}(\widetilde{ab})^{l+2})(\tilde{x}_0)-1,(\widetilde{ab})^{l+1}(\tilde{x}_0)]. 
\end{eqnarray*}
Then we have the following. 
\begin{enumerate}
\item $
\begin{array}{lllll}
\tilde{b}^{-1}((\widetilde{ab})^l(\tilde{x}_0)) &\in& \Int(\tilde{J}_{l-4}) \ {\textrm{and}} \\
(\tilde{b}\tilde{a})((\widetilde{ab})^l(\tilde{x}_0)) &\in& \Int(\tilde{J}_{l+5}). 
\end{array}
$
\item $
\begin{array}{lllll}
\tilde{a}(\tilde{J}_l) &=& \tilde{I}_{l+3}, \\
\tilde{b}(\tilde{I}_l) &\subset& \tilde{J}_{l+3} \ {\textrm{and}}\ \tilde{b}^{-1}(\tilde{I}_l)\ &\subset& \tilde{J}_{l-4}. 
\end{array}
$
\end{enumerate}
\end{Lem}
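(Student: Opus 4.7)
The plan is to reduce each claim to a comparison of consecutive points of the $\widetilde{ab}$-orbit of $\tilde x_0$ and then invoke Lemma \ref{ineq}(3) at a suitably shifted index. First I would record two preparatory identities. The \emph{periodicity relation}
\[
(\widetilde{ab})^{l+5}(\tilde x_0) = (\widetilde{ab})^l(\tilde x_0) + 1 \qquad (l\in\Z)
\]
is immediate from $(\widetilde{ab})^5(\tilde x_0) = \tilde x_0 + 1$ together with the commutativity of $\widetilde{ab}$ with integer translations. The second is the identity announced in the text, namely $\tilde a(\widetilde{ab})^l(\tilde x_0) = \tilde b(\widetilde{ab})^{l+4}(\tilde x_0) - 1$; I would derive it from $\tilde a^2(\tilde x) = \tilde x + 1$, $\tilde b^3(\tilde x) = \tilde x + 2$, and $\tilde a\tilde b(\tilde x) = \widetilde{ab}(\tilde x) + 1$ by setting $y = (\widetilde{ab})^l(\tilde x_0)$, using periodicity to obtain $\tilde a\tilde b(\widetilde{ab})^4(y) = y + 2$, and then applying $\tilde a$ to both sides and invoking $\tilde a^2(\tilde x) = \tilde x + 1$ to conclude $\tilde b(\widetilde{ab})^4(y) = \tilde a(y) + 1$.

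With these two identities in hand, each of the four conclusions becomes a direct endpoint computation. For (1), the relation $\tilde b^{-1}(\tilde y) = \tilde b^2(\tilde y) - 2$ (a consequence of $\tilde b^3(\tilde x) = \tilde x + 2$) converts $\tilde b^{-1}((\widetilde{ab})^l(\tilde x_0))$ into $\tilde b^2(\widetilde{ab})^l(\tilde x_0) - 2$, and Lemma \ref{ineq}(3) applied at $l' = l-4$ places this point strictly between the two endpoints of $\tilde J_{l-4}$. Similarly the hint identity rewrites $(\tilde b\tilde a)((\widetilde{ab})^l(\tilde x_0))$ as $\tilde b^2(\widetilde{ab})^{l+4}(\tilde x_0) - 1$; membership in $\Int(\tilde J_{l+5})$ then reduces, after one application of periodicity, to instances of Lemma \ref{ineq}(3) at $l' = l$ and $l' = l+2$.

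For (2), I would apply $\tilde a$, $\tilde b$, and $\tilde b^{-1}$ to the endpoints of $\tilde J_l$ and $\tilde I_l$ and simplify with the two preparatory identities. For instance, $\tilde a(\tilde b(\widetilde{ab})^{l+2}(\tilde x_0) - 1) = (\widetilde{ab})^{l+3}(\tilde x_0)$ follows directly from $\tilde a\tilde b(\tilde x) = \widetilde{ab}(\tilde x) + 1$, while the hint identity combined with periodicity gives $\tilde a((\widetilde{ab})^{l+1}(\tilde x_0)) = \tilde b(\widetilde{ab})^l(\tilde x_0)$; these two values match the endpoints of $\tilde I_{l+3}$ exactly and yield $\tilde a(\tilde J_l) = \tilde I_{l+3}$. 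The inclusions $\tilde b^{\pm 1}(\tilde I_l) \subset \tilde J_{l\pm 3}$ are treated analogously; each reduces after the endpoint computation to a single strict inequality of the form supplied by Lemma \ref{ineq}(3).

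The main obstacle is not conceptual but clerical: one must pick the correct shift $l'$ at which to invoke Lemma \ref{ineq}(3) in each case, and apply the periodicity relation once or twice to bring each resulting expression into the form of an endpoint of the appropriate $\tilde I_{l'}$ or $\tilde J_{l'}$. Once this bookkeeping is in place, every assertion collapses to a single inequality check.
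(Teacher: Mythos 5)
Your proposal is correct and takes essentially the same route as the paper, whose entire ``proof'' is the one-line assertion that the lemma follows from Lemma \ref{ineq}(3) together with the identity $\tilde{a}(\widetilde{ab})^l(\tilde{x}_0)=\tilde{b}(\widetilde{ab})^{l+4}(\tilde{x}_0)-1$; your derivation of that identity and your endpoint computations are exactly the details the paper leaves to the reader. One small bookkeeping note: for the second membership in (1), rewriting $\tilde{b}^2(\widetilde{ab})^{l+4}(\tilde{x}_0)-1$ as $\tilde{b}^2(\widetilde{ab})^{l+9}(\tilde{x}_0)-2$ via periodicity lets a single application of Lemma \ref{ineq}(3) at index $l+5$ do the job, rather than the two instances you indicate.
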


We denote by $\widetilde{\phi(\pi_1^{orb}(\mathcal{O}_{2,3}))}$ the subgroup of $\widetilde{\Homeo}_+(\mathbb{S}^1)$ consisting of lifts of elements of $\phi(\pi_1^{orb}(\mathcal{O}_{2,3}))$ to $\R$. 
We define a map $\tilde{\theta}$ of $\widetilde{\phi(\pi_1^{orb}(\mathcal{O}_{2,3}))}(\tilde{x}_0)$ onto itself by 
\begin{eqnarray*}
\tilde{\theta}(\widetilde{\phi(\gamma)}(\tilde{x}_0))=\widetilde{\phi(\gamma)}(\widetilde{ab}(\tilde{x}_0)),
\end{eqnarray*}
where $\gamma\in\pi_1^{orb}(\mathcal{O}_{2,3})$ and $\widetilde{\phi(\gamma)}$ is a lift of $\phi(\gamma)$ to $\R$. 

\begin{Lem}
The map $\tilde{\theta}$ is well-defined and strictly increasing.
\end{Lem}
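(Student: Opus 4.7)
The plan is to treat well-definedness and strict monotonicity together, first by identifying the stabilizer of $\tilde{x}_0$ in the lifted group $\tilde G := \widetilde{\phi(\pi_1^{orb}(\mathcal{O}_{2,3}))}$ and then invoking the Markov interleaving from Lemma \ref{ineq}(3). By construction $\tilde\theta$ is $\tilde G$-equivariant on the orbit $O := \tilde G \cdot \tilde{x}_0$, so well-definedness is equivalent to the inclusion $\mathrm{Stab}_{\tilde G}(\tilde{x}_0) \subseteq \mathrm{Stab}_{\tilde G}(\widetilde{ab}(\tilde{x}_0))$, and strict monotonicity says additionally that the order on $O$ inherited from $\R$ is the same whether we regard $\tilde{x}_0$ or $\widetilde{ab}(\tilde{x}_0)$ as the base point. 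The pivotal element is $\sigma := (\widetilde{ab})^5 T_{-1} \in \tilde G$: it fixes $\tilde{x}_0$ by the defining property of $\tilde{x}_0$ and commutes with $\widetilde{ab}$ (being built only from $\widetilde{ab}$ and the central translation $T_{-1}$), so every $\sigma^k$ fixes $\widetilde{ab}(\tilde{x}_0)$ as well.

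The first main step is to establish $\mathrm{Stab}_{\tilde G}(\tilde{x}_0) = \langle \sigma \rangle$, which directly yields well-definedness. Given $\tilde g \in \mathrm{Stab}_{\tilde G}(\tilde{x}_0)$, project it to $\phi(\gamma)$ for $\gamma \in \pi_1^{orb}(\mathcal{O}_{2,3}) = \Z_2 \ast \Z_3$ in reduced normal form. The cases $\gamma \in \langle\alpha\rangle$ and $\gamma \in \langle\beta\rangle$ collapse to $\gamma = 1$ because $\rot(\phi(\alpha)) = \tfrac12$ and $\rot(\phi(\beta)) = \tfrac23$ force every nontrivial power to act without fixed points. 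For an alternating $\gamma$, I would track the itinerary of $\tilde{x}_0$ syllable by syllable through the Markov partition, using the rules $\tilde a(\tilde J_l) = \tilde I_{l+3}$, $\tilde b(\tilde I_l) \subset \tilde J_{l+3}$, $\tilde b^{-1}(\tilde I_l) \subset \tilde J_{l-4}$ of Lemma \ref{Markov}(2) and the interior containments of Lemma \ref{Markov}(1). A Markov ping-pong analysis shows that a reduced alternating $\gamma$ returns $\tilde{x}_0$ to itself only when the word is a power of the cusp element $\alpha\beta$; combined with $\rot(\phi(\alpha\beta)) = \tfrac15$, this forces $\gamma \in \langle(\alpha\beta)^5\rangle$, and then matching lifts gives $\tilde g \in \langle\sigma\rangle$.

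For strict monotonicity, by $\tilde G$-equivariance it suffices to show that $\tilde g(\tilde{x}_0) < \tilde{x}_0$ implies $\tilde g(\widetilde{ab}(\tilde{x}_0)) < \widetilde{ab}(\tilde{x}_0)$. Lemma \ref{ineq}(3) provides the strictly interleaved chain
\[
(\widetilde{ab})^l(\tilde{x}_0)
< \tilde b(\widetilde{ab})^{l+2}(\tilde{x}_0) - 1
< \tilde b^2(\widetilde{ab})^{l+4}(\tilde{x}_0) - 2
< (\widetilde{ab})^{l+1}(\tilde{x}_0)
\]
for every $l \in \Z$, and this chain is carried to its $l \mapsto l+1$ shift under $\tilde\theta$. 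Consequently $\tilde\theta$ preserves the strict order on the ``basic'' orbit points $(\widetilde{ab})^l(\tilde{x}_0)$ and $\tilde a(\widetilde{ab})^l(\tilde{x}_0) = \tilde b(\widetilde{ab})^{l+4}(\tilde{x}_0) - 1$; for a general $\tilde g$ with $\tilde g(\tilde{x}_0) < \tilde{x}_0$, the same Markov-itinerary analysis applied to the projected reduced word locates $\tilde g(\widetilde{ab}(\tilde{x}_0))$ strictly below $\widetilde{ab}(\tilde{x}_0)$.

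The hard part is the Markov ping-pong in the stabilizer identification. A careful case analysis of the interaction of $\tilde a^{\pm 1}, \tilde b^{\pm 1}$ with the intervals $\tilde I_l, \tilde J_l$ is needed, and one must rule out every closed return to $\tilde{x}_0$ except those generated by the cusp cycle $\sigma$. In particular, the action of $\tilde b^{\pm 1}$ on $\tilde J_l$ is not directly listed in Lemma \ref{Markov}(2) and has to be deduced from the relations $\tilde a^2 = T_1$, $\tilde b^3 = T_2$ combined with the rules given there.
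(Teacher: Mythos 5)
Your overall architecture is in fact the same as the paper's: well-definedness is obtained by identifying $\mathrm{Stab}_{\tilde G}(\tilde x_0)$ with the cyclic group generated by your $\sigma=(\widetilde{ab})^{5}T_{-1}$ (the paper exhibits its elements explicitly as $\tilde x\mapsto(\widetilde{ab})^{-5m}(\tilde x)+m$), which fixes $\widetilde{ab}(\tilde x_0)$ because it commutes with $\widetilde{ab}$; monotonicity is the index-shift observation. The genuine gap is that the step you defer as ``the hard part'' --- ruling out every return of $\tilde x_0$ to itself except along powers of the cusp element --- \emph{is} the entire content of the lemma, and you give no argument for it beyond the assertion that ``a Markov ping-pong analysis shows'' it. Without that analysis nothing has been proved: a priori a long alternating word could fix $\tilde x_0$ without being a power of $\alpha\beta$, and then the commutation trick with $\sigma$ would not apply.

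For the record, here is how the paper closes exactly this step, so you can see your plan is viable. Write $\gamma$ in normal form $\beta^{e_0}\alpha\beta^{e_1}\cdots\alpha\beta^{e_n}$ (resp.\ the same word followed by a final $\alpha$) with $e_0\in\{0,\pm1\}$ and $e_i\in\{\pm1\}$. If some $e_i=-1$ (resp.\ some $e_i=+1$), look at the rightmost occurrence, so that $(e_i,\dots,e_n)=(-1,1,\dots,1)$ (resp.\ $(1,-1,\dots,-1)$); Lemma \ref{Markov}(1) places $\tilde b^{e_i}\tilde a\tilde b^{e_{i+1}}\cdots\tilde a\tilde b^{e_n}(\tilde x_0)$ (resp.\ $\cdots\tilde a(\tilde x_0)$) in the interior of some $\tilde J_l$, and Lemma \ref{Markov}(2) propagates interiority through the remaining letters, contradicting $\widetilde{\phi(\gamma)}(\tilde x_0)=\tilde x_0$ because $\tilde x_0$ is an endpoint of the partition. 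Note that the difficulty you flag at the end --- the action of $\tilde b^{\pm1}$ on $\tilde J_l$ --- never actually arises: in the normal form, $\tilde b^{\pm1}$ is applied either to $\tilde x_0$ itself (covered by Lemma \ref{Markov}(1)) or to the output of $\tilde a$, which lies in some $\tilde I_l$. This reduces $\gamma$ to $\beta^{e_0}(\alpha\beta)^n$ (resp.\ $\beta^{e_0}\alpha(\beta\alpha)^{n-1}$), Lemma \ref{ineq}(3) pins down $e_0$, and the translation-number count gives $n=-5m$, i.e.\ $\tilde g=\sigma^{-m}$. The same itinerary bookkeeping applied to $\tilde I_0$ and $\tilde I_1$ (resp.\ $\tilde J_{-1}$ and $\tilde J_0$) is what yields strict monotonicity; your reduction of monotonicity to the single implication about the base point is correct but again rests on the itinerary analysis you have not carried out.
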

\begin{proof}
First we prove that $\tilde{\theta}$ is well-defined. 
It suffices to show that for $\widetilde{\phi(\gamma)}\in\widetilde{\phi(\pi_1^{orb}(\mathcal{O}_{2,3}))}$ with $\widetilde{\phi(\gamma)}(\tilde{x}_0)=\tilde{x}_0$, we have $\widetilde{\phi(\gamma)}(\widetilde{ab}(\tilde{x}_0))=\widetilde{ab}(\tilde{x}_0)$. 

If $\gamma=\beta^{e_0}\alpha\beta^{e_1}\cdots \alpha\beta^{e_n}$, where $e_0\in\{0,\pm 1\}$ and $e_i\in\{\pm 1\}$ for $i\in\{1,\ldots,n\}$, then we have $e_i\neq -1$ for $i\in\{0,1,\ldots,n\}$. 
Indeed if $(e_i,e_{i+1},\ldots,e_n)=(-1,1,\ldots,1)$ for some $i\in\{0,1,\ldots,n\}$, then it would follow from Lemma \ref{Markov} (1) that 
\begin{eqnarray*}
\tilde{b}^{e_i}\cdots \tilde{a}\tilde{b}^{e_n}(\tilde{x}_0)
&=& \tilde{b}^{-1}(\tilde{a}\tilde{b})^{n-i}(\tilde{x}_0)) \\
&=& \tilde{b}^{-1}((\widetilde{ab})^{n-i}(\tilde{x}_0))+(n-i)
\in\Int(\tilde{J}_{6(n-i)-4})
\end{eqnarray*}
and hence
\begin{eqnarray*}
\widetilde{\phi(\gamma)}(\tilde{x}_0)\in\Int(\tilde{I}_{l})\cup\Int(\tilde{J}_{l})
\end{eqnarray*}
for some $l\in\Z$ by Lemma \ref{Markov} (2), which contradicts the assumption. 

\noindent
Therefore we have $\gamma=\beta^{e_0}(\alpha\beta)^n$, where $e_0\in\{0,1\}$ and it follows from Lemma \ref{ineq} (3) we have $e_0\neq 1$. 
Hence  there exists an integer $m\in\Z$ such that 
\begin{eqnarray*}
\widetilde{\phi(\gamma)}(\tilde{x})=(\widetilde{ab})^n(\tilde{x})+m
\end{eqnarray*}
for every $\tilde{x}\in\R$. 
We have $n=-5m$ by the assumption and hence 
\begin{eqnarray*}
\widetilde{\phi(\gamma)}(\widetilde{ab}(\tilde{x}_0))
=(\widetilde{ab})^{-5m+1}(\tilde{x}_0)+m
=\widetilde{ab}(\tilde{x}_0).
\end{eqnarray*}

If $\gamma=\beta^{e_0}\alpha\beta^{e_1}\cdots \alpha\beta^{e_n}\alpha$, where $e_0\in\{0,\pm 1\}$ and $e_i\in\{\pm 1\}$ for $i\in\{1,\ldots,n\}$, then we have $e_i\neq 1$ for $i\in\{0,1,\ldots,n\}$. 
Indeed if $(e_i,e_{i+1},\ldots,e_n)=(1,-1,\ldots,-1)$ for some $i\in\{0,1,\ldots,n\}$, then it would follow from Lemma \ref{Markov} (1) that 
\begin{eqnarray*}
\tilde{b}^{e_i}\cdots \tilde{a}\tilde{b}^{e_n}(\tilde{x}_0)
&=&(\tilde{b}\tilde{a})(\tilde{b}^{-1}\tilde{a})^{n-i}(\tilde{x}_0)) \\
&=&(\tilde{b}\tilde{a})((\widetilde{ab})^{-(n-i)}(\tilde{x}_0))
\in\Int(\tilde{J}_{-(n-i)+5})
\end{eqnarray*}
and hence
\begin{eqnarray*}
\widetilde{\phi(\gamma)}(\tilde{x}_0)\in\Int(\tilde{I}_{l})\cup\Int(\tilde{J}_{l})
\end{eqnarray*}
for some $l\in\Z$ by Lemma \ref{Markov} (2), which contradicts the assumption. 

\noindent
Therefore we have $\gamma=\beta^{e_0}\alpha(\beta\alpha)^{n-1}$, where $e_0\in\{0,-1\}$ and it follows from Lemma \ref{ineq} (3) that we have $e_0\neq 0$. 
Hence there exists an integer $m\in\Z$ such that 
\begin{eqnarray*}
\widetilde{\phi(\gamma)}(\tilde{x})=(\widetilde{ab})^{-(n+1)}(\tilde{x})+m
\end{eqnarray*}
for every $\tilde{x}\in\R$. 
We have $n=5m-1$ by the assumption and hence 
\begin{eqnarray*}
\widetilde{\phi(\gamma)}(\widetilde{ab}(\tilde{x}_0))=(\widetilde{ab})^{-5m+1}(\tilde{x}_0)+m=\widetilde{ab}(\tilde{x}_0).
\end{eqnarray*}

Next we prove that $\tilde{\theta}$ is strictly increasing. 
It suffices to show that for $\widetilde{\phi(\gamma)}\in\widetilde{\phi(\pi_1^{orb}(\mathcal{O}_{2,3}))}$ with $\tilde{x}_0 < \widetilde{\phi(\gamma)}(\tilde{x}_0)$, we have $\tilde{\theta}(\tilde{x}_0) < \tilde{\theta}(\widetilde{\phi(\gamma)}(\tilde{x}_0))$. 

If $\gamma=\beta^{e_0}\alpha\beta^{e_1}\cdots \alpha\beta^{e_n}$, where $e_0\in\{0,\pm 1\}$ and $e_i\in\{\pm 1\}$ for $i\in\{1,\ldots,n\}$, then it follows from Lemma \ref{Markov} (2) that 
\begin{eqnarray*}
\widetilde{\phi(\gamma)}(\tilde{I}_0)\subset \tilde{I}_{l}\cup\tilde{J}_{l}
\end{eqnarray*}
for some non-negative integer $l\in\Z$. 
This implies that 
\begin{eqnarray*}
\widetilde{\phi(\gamma)}(\tilde{I}_1)\subset \tilde{I}_{l+1}\cup\tilde{J}_{l+1}
\end{eqnarray*}
and hence $\tilde{\theta}(\tilde{x}_0) < \tilde{\theta}(\widetilde{\phi(\gamma)}(\tilde{x}_0))$. 

If $\gamma=\beta^{e_0}\alpha\beta^{e_1}\cdots \alpha\beta^{e_n}\alpha$, where $e_0\in\{0,\pm 1\}$ and $e_i\in\{\pm 1\}$ for $i\in\{1,\ldots,n\}$, then it follows from Lemma \ref{Markov} (2) that 
\begin{eqnarray*}
\widetilde{\phi(\gamma)}(\tilde{J}_{-1})\subset \tilde{I}_{l}\cup\tilde{J}_{l}
\end{eqnarray*}
for some non-negative integer $l\in\Z$. 
This implies that 
\begin{eqnarray*}
\widetilde{\phi(\gamma)}(\tilde{J}_0)\subset \tilde{I}_{l+1}\cup\tilde{J}_{l+1}
\end{eqnarray*}
and hence $\tilde{\theta}(\tilde{x}_0) < \tilde{\theta}(\widetilde{\phi(\gamma)}(\tilde{x}_0))$. 
\end{proof}

The map $\tilde{\theta}$ is $\widetilde{\phi(\pi_1^{orb}(\mathcal{O}_{2,3}))}$-equivariant and we have $\tilde{\theta}^5(\widetilde{\phi(\gamma)}(\tilde{x}_0))=\widetilde{\phi(\gamma)}(\tilde{x}_0)+1$ for every element $\widetilde{\phi(\gamma)}$ of $\widetilde{\phi(\pi_1^{orb}(\mathcal{O}_{2,3}))}$. 
Since $\phi$ is minimal, $\widetilde{\phi(\pi_1^{orb}(\mathcal{O}_{2,3}))}(\tilde{x}_0)$ is dense in $\R$ and hence $\tilde{\theta}$ can be extended to an element of $\widetilde{\Homeo}_+(\mathbb{S}^1)$, which we also denote by $\tilde{\theta}$. 
The homeomorphism $\tilde{\theta}$ is $\widetilde{\phi(\pi_1^{orb}(\mathcal{O}_{2,3}))}$-equivariant and we have $\tilde{\theta}^5(\tilde{x})=\tilde{x}+1$ for every $\tilde{x}\in\R$. 
This gives the desired homeomorphism $\theta\in\Homeo_+(\mathbb{S}^1)$. \\

\textbf{Acknowledgements}\ The author would like to thank the referee for his/her careful reading of the manuscript and helpful comments.

\end{document}